\tikzstyle{weights} = [rectangle, rounded corners, minimum width=1cm, minimum height=1cm,text centered, draw=black]
\tikzstyle{biases} = [rectangle, rounded corners, minimum width=1cm, minimum height=1cm,text centered, draw=black]
\tikzstyle{Relu} = [rectangle, rounded corners, minimum width=1cm, minimum height=1cm,text centered, draw=black, fill=yellow!30]
\tikzstyle{Li} = [rectangle, dashed, rounded corners, minimum width=1cm, minimum height=1cm,text centered, draw=black]
\tikzstyle{li} = [rectangle, dashed, rounded corners, minimum width=.5cm, minimum height=.5cm,text centered, draw=black]
\tikzstyle{lweights} = [rectangle, rounded corners, minimum width=.5cm, minimum height=.5cm,text centered, draw=black, fill=gray!30]
\tikzstyle{lbiases} = [rectangle, rounded corners, minimum width=.5cm, minimum height=.5cm,text centered, draw=black, fill=gray!30]
\DeclareMathOperator{\cone}{cone}
\DeclareMathOperator{\conv}{conv}
\DeclareMathOperator{\BCE}{BCE}
\DeclareMathOperator{\HB}{\mathbb{HB}}
\DeclareMathOperator{\IDP}{\mathbb{IDP}}
\DeclareMathOperator{\height}{height}
\DeclareMathOperator{\supp}{\bf{supp}}
\DeclareMathOperator{\cutoff}{\bf{cutoff}}
\newcommand{\R}{\mathbb{R}}
\newcommand{\cS}{\mathcal{S}}
\newcommand{\Z}{\mathbb{Z}}
\renewcommand{\phi}{\varphi}
\def\x{{\boldsymbol x}}
\newcommand\commentout[1]{}
\newtheorem{theorem}{Theorem}[section]
\newtheorem{corollary}[theorem]{Corollary}
\newtheorem{proposition}[theorem]{Proposition}
\theoremstyle{remark}
\newtheorem{example}[theorem]{Example}
\newtheorem{remark}[theorem]{Remark}
\theoremstyle{definition}
\newtheorem{definition}[theorem]{Definition}
\begin{document}

\title{Predicting the Integer Decompostion Property\\ via Machine Learning}

\author{Brian Davis}
\address{Department of Mathematics\\
         University of Kentucky\\
         Lexington, KY 40506--0027}
\email{brian.davis@uky.edu}

\date{\today}

\begin{abstract}
In this paper we investigate the ability of a neural network to approximate algebraic properties associated to lattice simplices. In particular we attempt to predict the distribution of Hilbert basis elements in the fundamental parallelepiped, from which we detect the integer decomposition property (IDP). We give a gentle introduction to neural networks and discuss the results of this prediction method when scanning very large test sets for examples of IDP simplices. 
\end{abstract}

\maketitle
\section{Introduction} 
Due to the maturity and ubiquity of machine learning techniques and applications, open--source software libraries such as {\bf{Tensorflow}} have become available to non-specialists. These libraries are typically well--documented, and friendly technical references are freely available online, e.g., \cite{DeepLearningBook}. In this environment, it seems natural to ask:
\begin{center}\emph{How do we apply machine learning technology to algebraic combinatorics?}
\end{center}
It is not clear how to extract human--understandable meaning from the raw numerical data of, for example, a neural network (for a discussion of comprehensibility, see \cite{understandability}.) We therefore  employ these techniques for their prediction and approximation power, rather than for use in theorems and their proofs. There is a long history of using neural networks in order to approximate solutions to combinatorial optimization problems, e.g. the traveling salesman problem \cite{travelingSalesman}, and in \cite{conjugacy}, Gryak, Haralick, and Kahrobaei use machine learning to predict if two elements of a group are conjugate. It seems reasonable, then, to hope that machine learning has some applicability to problems at the intersection of combinatorics and algebra.

We intend for this work to be an introduction to neural networks and a proof of concept for the use of machine learning, and neural networks in particular, in predicting properties relevant to lattice points in polyhedra. As a particular application, we attempt to predict the integer decomposition property (IDP) in a special class of lattice simplices.

In their paper \cite{2016arXiv160801614B}, Braun, Davis, and Solus study the infinite family of lattice simplices of the form \[
\Delta_{(1,q)}=\conv\left\{e_1,\dots,e_d,-\sum_{i=1}^dq_ie_i\right\}\subset\R^d,
\]
where $q_i\in\Z_{\geq0}$ for all $i$, and give sufficient conditions on the entries $q_i$ (the {\bf{$q$-vector}}) for a such a simplex to be IDP in the case that it is reflexive. In the present work we will ``train" a neural network to predict if a given example of a $\Delta_{(1,q)}$ simplex is IDP without actually computing the Hilbert basis.

In Section 2, after presenting the basics of Hilbert bases, we interpret the integer decomposition property as a composition of functions to be approximated. In Section 3 we develop the general framework for training a neural network using the language of piece-wise linear functions\footnote{This exposition agrees with the more common descriptions of neural networks when restricted to the case that the source of the training data is a well-defined function and we use {\bf{ReLU}} activation functions.} and stochastic gradient descent. In Section 4 we discuss a piece-wise linear approximation of the integer decomposition property and its accuracy. 
 
\section{Approximating the integer decomposition property as a function}
We will now introduce the Hilbert basis and use it to define the integer decomposition property. We encode the integer decomposition property as a real valued function $\IDP$ in Subsection \ref{IDPdefinition}, and define what it means to approximate the integer decomposition property.
\subsection{Hilbert Basics}
Let $v_1,\dots,v_{d+1}$ be affinely independent elements of the integer lattice $\Z^d$. Their convex hull is a $d$-simplex 
\[\Delta := \left\{\sum_{i=1}^{d+1}\gamma_iv_i\;:\;0\leq \gamma_i\;, \; \sum_{i=1}^{d+1}\gamma_i=1\right\}\subset\R^d
,\] 
and we define $\cone(\Delta)$ to be the non-negative real span of the points in $(1,\Delta)$, i.e., $\Delta$ embedded into $\R^{d+1}$ at height 1 in the zeroth coordinate:
\[\cone(\Delta) := \left\{\sum_{i=1}^{d+1}\gamma_i(1,v_1)\;:\; 0\leq\gamma_i\right\}\subset\R^{d+1}
.\]
The set $\cone(\Delta)\cap\Z^{d+1}$ is closed under addition, and the unique minimal collection of additive generators is called 
the {\bf{Hilbert basis}} of $\cone(\Delta)$.

We call the zeroth coordinate of a point $z$ in $\cone(\Delta)\cap\Z^{d+1}$ the {\bf{height}} of $z$, denoted \[\height(z):=z_0,\]
and we say that the simplex $\Delta$ has the {\bf{integer decomposition property}} (IDP) if, for each element of the Hilbert basis of $\cone(\Delta)$, the height is equal to 1.

Define the {\bf{fundamental parallelepiped}} of $\Delta$ to be the weighted sum of the cone generators $(1,v_i)$ with non-negative weights strictly less than 1:
\[
\Pi_\Delta:=\left\{\sum_{i=1}^{d+1}\gamma_i(1,v_i) \, : \, 0\leq\gamma_i<1\right\}\subset\cone(\Delta) \, .
\]
Because any element $z$ of $\cone(\Delta)\cap\Z^{d+1}$ lies in $\cone(\Delta)$, it is a non-negative linear combination of the $(1,v_i)$'s, i.e., there exist non-negative real coefficients $g_i$ such that \[z=\sum_{i=1}^{d+1}g_i(1,v_i)=\left(\sum_{i=1}^{d+1}\left\lfloor g_i\right\rfloor (1,v_i)\right) + \left(\sum_{i=1}^{d+1}\{g_i\} (1,v_i)\right)\] where $\{g_i\}$ means the fractional part of $g_i$. By setting $\gamma_i$ equal to $\{g_i\}$, we see that any point $z$ may be written  as a non-negative {\emph{integral}} combination of the $(1,v_i)$'s and an integer point in $\Pi_\Delta\cap\Z^{d+1}$. Consequently, the Hilbert basis consists of the cone generators $(1,v_i)$ together with the additively minimal elements of $\Pi_\Delta\cap\Z^{d+1}$.

\subsection{Partitioning $\Pi$} We partition $\Pi_\Delta$ into disjoint subsets we call {\bf{bins}} $B_\alpha$ for $\alpha$ in $\{0,\dots,d\}^{d+1}$, with $z \in B_\alpha$ if and only if ${(\lfloor (d+1)\gamma_1\rfloor,\dots,\lfloor (d+1)\gamma_{d+1}\rfloor) = \alpha}$, where the $\gamma_i$'s are the coefficients of the representation of $z$ in terms of the generators $(1,v_i)$.

\begin{proposition}\label{heightProp} Let $z$ be an integer point in $B_\alpha$. Then
\[
\height(z)=\left\lceil\frac{\sum_{i=1}^{d+1}\alpha_i}{d+1}\right\rceil
.\]
\end{proposition}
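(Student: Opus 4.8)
The plan is to translate the condition $z\in B_\alpha$ into two-sided bounds on each coefficient $\gamma_i$, sum these bounds so as to sandwich $\height(z)=\sum_i\gamma_i$ inside a half-open interval of length one, and then use the integrality of $\height(z)$ to identify it with the unique integer in that interval.

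First I would unpack the definition of the bin: $z\in B_\alpha$ says precisely that $\lfloor(d+1)\gamma_i\rfloor=\alpha_i$ for every $i$, which is equivalent to the inequalities
\[
\frac{\alpha_i}{d+1}\le\gamma_i<\frac{\alpha_i+1}{d+1}\qquad(i=1,\dots,d+1),
\]
with the left inequality weak and the right one strict. Summing over $i$ and recalling from the construction of $\cone(\Delta)$ that $\height(z)=z_0=\sum_{i=1}^{d+1}\gamma_i$, one obtains
\[
\frac{\sum_{i=1}^{d+1}\alpha_i}{d+1}\;\le\;\height(z)\;<\;\frac{\sum_{i=1}^{d+1}\alpha_i}{d+1}+1,
\]
the right-hand strictness being forced because we are adding $d+1$ strict inequalities and $\sum_i(\alpha_i+1)=\sum_i\alpha_i+(d+1)$.

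Finally, since $z\in\Z^{d+1}$ its zeroth coordinate $\height(z)$ is an integer, and a half-open interval $[a,a+1)$ contains exactly one integer, namely $\lceil a\rceil$: indeed $\lceil a\rceil\ge a$ by definition of the ceiling, while $\lceil a\rceil<a+1$ since $\lceil a\rceil-1<a$. Taking $a=\frac{\sum_{i=1}^{d+1}\alpha_i}{d+1}$ then gives $\height(z)=\left\lceil\frac{\sum_{i=1}^{d+1}\alpha_i}{d+1}\right\rceil$, which is the claimed identity (and in the degenerate case where the lower bound is attained, it is itself an integer and equals its own ceiling, so the formula persists).

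I expect no substantive obstacle here; the one place that demands care is the bookkeeping of strict versus weak inequalities. If one mistakenly allowed $\gamma_i=\frac{\alpha_i+1}{d+1}$, the interval bounding $\height(z)$ would become a closed interval of length one and could contain two integers, so the strictness inherited from the floor function in the definition of $B_\alpha$ is exactly what makes the stated formula well-posed.
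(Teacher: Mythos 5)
Your proof is correct and follows essentially the same route as the paper's: both convert the floor condition $\lfloor(d+1)\gamma_i\rfloor=\alpha_i$ into two-sided bounds, sum over $i$ to trap $\height(z)=\sum_i\gamma_i$ in a half-open interval of length one, and invoke integrality of the height to identify it with the ceiling. Your version merely divides by $d+1$ before summing and spells out the ceiling characterization a bit more explicitly; there is no substantive difference.
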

\begin{proof}
Considering the zeroth coordinate of $z$, it is clear that $\height(z)=\sum_{i=1}^{d+1}\gamma_i$.

Note that since ${\alpha=(\lfloor (d+1)\gamma_1\rfloor,\dots,\lfloor (d+1)\gamma_{d+1}\rfloor) }$,
 the inequality \[(d+1)\gamma_i-1<\lfloor (d+1)\gamma_i\rfloor\leq (d+1)\gamma_i\]
  implies that \[\sum_{i=1}^{d+1}\big((d+1)\gamma_i-1\big) \;<\; \sum_{i=1}^{d+1}\alpha_i\;\leq \sum_{i=1}^{d+1} (d+1)\gamma_i.\] 
  
 Thus 
\[
(d+1)\big(\height(z)-1\big)<\;\sum_{i=1}^{d+1}\alpha_i\;\leq (d+1)\height(z),
\]and 
\[\height(z)-1<\frac{\sum_{i=1}^{d+1}\alpha_i}{d+1}\leq\height(z),
\]
from which the result follows.\end{proof}

This leads to the following characterization of the integer decomposition property:
\begin{corollary} \label{HBIDP.cor}The simplex $\Delta$ is IDP if and only if for each $z$ in the Hilbert basis of $\cone(\Delta)$, $z\in B_\alpha$ implies that $\sum_{i=1}^{d+1}\alpha_i$ is at most $d+1$.  
\end{corollary}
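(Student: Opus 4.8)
The plan is to reduce the statement directly to Proposition \ref{heightProp} together with the description of the Hilbert basis recorded in Subsection 2.1. Recall from there that the Hilbert basis of $\cone(\Delta)$ consists of the cone generators $(1,v_i)$ together with the additively minimal elements of $\Pi_\Delta\cap\Z^{d+1}$, and that $\Delta$ is IDP precisely when every Hilbert basis element has height $1$. So the whole argument amounts to translating ``height $1$'' into a condition on the bin-index $\alpha$ using Proposition \ref{heightProp}, while keeping careful track of which Hilbert basis elements actually lie in a bin.

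First I would dispose of the cone generators. Each $(1,v_i)$ has height $1$ by inspection, so it never obstructs the IDP condition. Moreover, since the $(1,v_j)$ are linearly independent, the representation of $(1,v_i)$ in terms of them is unique and has $\gamma_i=1$, hence $\lfloor(d+1)\gamma_i\rfloor=d+1\notin\{0,\dots,d\}$; thus $(1,v_i)$ lies in no bin $B_\alpha$ with $\alpha\in\{0,\dots,d\}^{d+1}$ and vacuously satisfies the condition in the corollary. Consequently both sides of the claimed equivalence are unaffected by the cone generators, and it suffices to argue about an arbitrary additively minimal $z\in\Pi_\Delta\cap\Z^{d+1}$.

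For such a $z$, write $z=\sum_{i=1}^{d+1}\gamma_i(1,v_i)$ with $0\le\gamma_i<1$; then $0\le(d+1)\gamma_i<d+1$ forces $\lfloor(d+1)\gamma_i\rfloor\in\{0,\dots,d\}$, so $z$ lies in a well-defined bin $B_\alpha$. Applying Proposition \ref{heightProp} gives $\height(z)=\bigl\lceil(\sum_{i=1}^{d+1}\alpha_i)/(d+1)\bigr\rceil$. For the forward direction, IDP gives $\height(z)=1$, hence $(\sum_{i=1}^{d+1}\alpha_i)/(d+1)\le1$, i.e. $\sum_{i=1}^{d+1}\alpha_i\le d+1$. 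For the reverse direction, the hypothesis $\sum_{i=1}^{d+1}\alpha_i\le d+1$ yields $\height(z)=\bigl\lceil(\sum_{i=1}^{d+1}\alpha_i)/(d+1)\bigr\rceil\le1$; combined with the observation that $z\ne0$ (being additively minimal) forces $\sum_{i=1}^{d+1}\gamma_i>0$ and hence $\height(z)\ge1$, we conclude $\height(z)=1$. Since this holds for every Hilbert basis element in $\Pi_\Delta$, and the cone generators already have height $1$, $\Delta$ is IDP.

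The argument is essentially a bookkeeping exercise once Proposition \ref{heightProp} is available, so I do not expect a genuine obstacle; the only point requiring care — and thus the ``hard part'' — is that the partition into bins $B_\alpha$ is defined only on $\Pi_\Delta$, which is why the cone generators must be treated separately, and that one must separately rule out a Hilbert basis element of $\Pi_\Delta\cap\Z^{d+1}$ having height $0$ in order to upgrade $\height(z)\le1$ to $\height(z)=1$.
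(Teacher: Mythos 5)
Your proof is correct and follows the same route as the paper, which states the corollary as an immediate consequence of Proposition \ref{heightProp} without writing out a proof. Your additional care in checking that the cone generators lie in no bin (so the condition is vacuous for them) and that an additively minimal element of $\Pi_\Delta\cap\Z^{d+1}$ is nonzero (so $\height(z)\le 1$ upgrades to $\height(z)=1$) fills in exactly the details the paper leaves implicit.
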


\subsection{The function $\IDP$}\label{IDPdefinition}
\begin{definition} $\IDP$ is the $0/1$ function which takes as input the vertices of a lattice $d$-simplex $\Delta$ and returns one if $\Delta$ is IDP and zero otherwise.
\end{definition}
 We were unsuccessful in approximating $\IDP$ directly using techniques presented in this paper. Instead, we find success approximating another function, $\HB$, from which the value of $\IDP$ can be inferred. 

For a vector $x\in\R^{(d+1)^{d+1}}$, we consider $x$ to be multi-indexed by the collection of $\alpha\in \{0,\dots,d\}^{d+1}$. 

\begin{definition}$\HB$ is the function taking as input the vertices of a lattice $d$-simplex and returning an element of  $\{0,1\}^{(d+1)^{d+1}}$, with coordinate $\alpha$ equal to one if and only if there exists a Hilbert basis element in bin $B_\alpha$.

\begin{example}\label{runningExample} Consider the $\Delta_{(1,q)}$ simplex in dimension $d=2$ with $q$-vector $(2,1)$. The ray generators are $v_1=(1,0,1)$, $v_2=(0,1,1)$, and $v_3=(-2,-1,1)$. Computation with {\bf{Normaliz}} \cite{Normaliz} yields that the set of lattice points in $\Pi_{\Delta_{(1,g)}}$ is equal to $\left\{(0,0,0),\;(-1,0,1),\;(0,0,1)\right\}$. The representation of these points in terms of the ray generators are $(0,0,0)$, $(0,\sfrac{1}{2},\sfrac{1}{2})$, and $(\sfrac{1}{2},\sfrac{1}{4},\sfrac{1}{4})$. The bins $B_\alpha$ which contain these points have $\alpha$ equal to: 
\begin{align*} 
&\left(\left\lfloor {3\cdot 0} \right\rfloor,\left\lfloor {3\cdot 0} \right\rfloor,\left\lfloor {3\cdot 0} \right\rfloor\right)=(0,0,0),\\
&\left(\left\lfloor {3\cdot 0} \right\rfloor,\left\lfloor {3\cdot \sfrac{1}{2}} \right\rfloor,\left\lfloor {3\cdot \sfrac{1}{2}} \right\rfloor\right)=(0,1,1), \text{ and }\\
&\left(\left\lfloor {3\cdot \sfrac{1}{2}} \right\rfloor,\left\lfloor {3\cdot \sfrac{1}{4}} \right\rfloor,\left\lfloor {3\cdot \sfrac{1}{4}} \right\rfloor\right)=(1,0,0),
\end{align*} respectively.

When the $\alpha$ are lexicographically ordered, we may write the image under $\HB$ as the vector \[(1, 0, 0, 0, 1, 0, 0, 0, 0, 1, 0, 0, 0, 0, 0, 0, 0, 0, 0, 0, 0, 0, 0, 0, 0, 0, 0)\in\R^{3^3}.\]
\end{example}

\end{definition}

Let $\supp$ be the $0/1$ function on $\R^{(d+1)^{d+1}}$ which, for a vector $x$, returns zero if and only if there exists an index $\alpha$ such that $x_\alpha\neq0$ and $\sum_{i=1}^{d+1}\alpha_i>d+1$. Then, using Corollary \ref{HBIDP.cor}, we may write the functional equality 
\[\IDP=\supp\circ\HB.
\]

Note that for Example \ref{runningExample}, the non-zero entries are at multi-indices $(0,0,0)$, $(0,1,1)$ and $(1,0,0)$, and that the sum of each individual multi-index is not more than 3. Thus the image of $\IDP$ is equal to 1, indicating that the example is IDP. We can verify this fact by noting that the height of each lement of the Hilbert basis 
\[\left\{v_1,v_2,v_3\right\}\;\bigcup\;\big\{(-1,0,1),(0,0,1)\big\}
\] is equal to 1. We remark that it is not true in general that the Hilbert basis elements are the non-zero lattice points of $\Pi_\Delta$.

We have developed a theoretical framework for approximating the integer decomposition property by approximating the real-valued function $\HB$. One difficulty in the implementation of this scheme is the fact that $\supp$ is not sensitive to how close to zero a value is. If the entry at some multi-index $\alpha$ in the approximation of $\HB$ is close to but not equal to zero, and $\sum_{i=1}^{d+1}\alpha_i>d+1$, then the image of $\supp$ will be 0, i.e., our approximation of $\IDP$ will almost always predict that an example is {\emph{not}} IDP. A standard solution to this issue is to first map our approximation into the open interval $(0,1)$, then choose a value $0\leq\eta\leq1$, then interpret values less than or equal to $\eta$ as 0 and greater than $\eta$ as 1. For the the first step, we use the Sigmoid function:   
\[\sigma(x) := (1+e^{-x})^{-1},\] mapping $\R$ one-to-one onto the open interval $(0,1)$. 
For some fixed $0\leq\eta\leq1$, define 
\[\cutoff(x)=\begin{cases} 0& \text{if $x\leq\eta$, and } \\
1& \text{otherwise.}
\end{cases}\]
The composition of $\cutoff$ and $\sigma$ allows us to turn any real valued function of one variable into a $0/1$ function, and by applying it coordinate-wise, we may turn any function $f\;:\;\R^u\longrightarrow\R^v$ into $\cutoff\circ\;\sigma\circ f\;:\;\R^u\longrightarrow \{0,1\}^v$. 
In particular, consider $\R^u$ to be the space parameterizing the vertex sets of lattice $d$-simplices: $\R^u=\R^d\times\cdots\times\R^d$ with $d+1$ factors $\R^d$ (not all points in the space give rise to full-dimensional simplices.) Further consider $\R^v$ to have basis multi-indexed by $\alpha\in\{0,\dots,d\}^{d+1}$.
Then for any map $f$ from $\R^u$ to $\R^v$, $\cutoff\;\circ\;\sigma\;\circ\;f$ may be considered as a map from lattice $d$-simplices  to 0/1 vectors indexed by bins $B_\alpha$.

Continuing Example \ref{runningExample}, consider the function $f\;:\R^{2\times3}\longrightarrow\R^{3^3}$ defined by \[f(\x)=\frac{1}{\Vert \x\Vert}\left(\sum_{i=1}^{27}(-1)^i\cdot e_i\right).\] Then $f(1,0,0,1,-2,-1)=\frac{1}{\sqrt{7}}\left(\sum_{i=1}^{27}(-1)^i\cdot e_i\right)$. We compute that $\sigma(1/\sqrt7)=0.593$, and that $\sigma(-1/\sqrt7)=0.407$. Thus if $0\leq\eta<0.407$, then $\cutoff\circ\;\sigma\circ f(1,0,0,1,-2,-1)$ is the all-ones vector, and if $0.593\leq\eta\leq1$ then $\cutoff\circ\;\sigma\circ f(1,0,0,1,-2,-1)$ is the zero vector. If $0.407\leq\eta<0.593$, then $\cutoff\circ\;\sigma\circ f(1,0,0,1,-2,-1)$ is the 0/1-vector $\sum_{i=1}^{27}\frac{\left(1+(-1)^i\right)}{2}\cdot e_i$.

The quality of the approximation depends heavily on the choice of value for $\eta$, as for the fixed function $f$, $\cutoff\circ\;\sigma\circ f$ can be correct on $11\%$, $89\%$, or $33\%$ of the entries of $\HB$, depending on the choice of $\eta$.

\begin{definition} Let $f$ be any function from $\R^{d(d+1)}$\; to\; $\R^{(d+1)^{d+1}}$. Then we call the (coordinate-wise) composite  function \[\widehat{\IDP}:=
\supp\;\circ\;\cutoff\;\circ\;\sigma\;\circ\;f
\] an approximation of the integer decomposition property.\end{definition}
Note that when $\sigma\circ f$ closely approximates $\HB$ coordinate-wise, $\widehat{\IDP}$ agrees with $\IDP$. For a given $f$, we will use the shorthand notation $\widehat{\HB}$ for the composite function $\cutoff\;\circ\;\sigma\;\circ\;f$.
\section{A general approximation method} 
In this section we describe piece-wise linear functions as compositions of affine transformations and a well-behaved piece-wise linear function $\rho$. We next describe the use of a loss function $L$ in quantifying the accuracy of an approximation $\widehat{f}$ of a function $f$. We then describe an algorithm called gradient descent, which deforms the piece-wise linear function $\widehat{f}$ in order to minimize the loss function $L$ with respect to the target function $f$. 
  
Let $f$ be any set map from $\R^u$ to $\R^v$. We will approximate $f$ by constructing a {\emph{random}} initial ``approximation" $\widehat{f}$, which we will deform until we have a sufficiently accurate approximation. 
 
For a positive integer $m$, fix $m$ positive integers $\ell_1$ through $\ell_{m}$, as well as a small real ${\epsilon>0}$. We will call this the collection of {\bf{hyper-parameters}}. Choose matrices ${W_{k}\in\R^{\ell_{k-1}\times\ell_k}}$ for ${1\leq k\leq m+1}$, where we set $\ell_0=u$ and $\ell_{m+1}=v$ (the dimensions of the domain and codomain of $f$.) Additionally, for each $k$,  choose  vectors $b_{k}\in \R^{\ell_{k}}$. The entries $\left(W_k\right)_{i,j}$ are called {\bf{weights}}, and the $(b_k)_i$ are called {\bf{biases}}. Generally, the initial values are randomized by an algorithm we will not discuss here. We will consider each such collection of {\bf{parameters}} to be a point \[p=\big(W_1,b_1,\dots,W_{m+1},b_{m+1}\big)
\] in the space of parameters 
$\R^{(\ell_0+1)\times\ell_1}\times\cdots\times\R^{(\ell_{m}+1)\times\ell_{m+1}}$. We define $\rho$ to be the function which returns the coordinate-wise maximum of 0 and the identity, i.e., $\rho(x_i)=\max(0,x_i)$. The map $\rho$ is an example of an {\bf{activation function}} and is called {\bf{ReLU}} (Rectified Linear Unit.)
Let $\omega_k$ be the affine map $x\mapsto W_k(x)+b_{k}$ composed with $\rho$.  Then the approximation $\widehat{f}$ is the function 
\[\widehat{f}(x;p) = W_{m+1} \circ \;\omega_{m}\circ\cdots\circ \;\omega_1(x)+b_{m+1}.\]

\begin{center}
\begin{figure}[h]
\begin{tikzpicture}
\node (input) {$\R^u$};
\node (L1) [Li , right of =input,xshift = .5cm] {$\omega_1$};
\node (L2) [Li , right of =L1,xshift = .5cm] {$\omega_2$};
\node (dots) [right of =L2,xshift = .25cm] {$\cdots$};
\node (Lk) [Li , right of =dots,xshift = .25cm] {$\omega_m$};
\node (Weights) [weights , right of =Lk,xshift=.5cm] {$W_{m+1}$ };
\node (Biases) [biases, right of =Weights,xshift=.5cm] {$b_{m+1}$};

\node (output) [right of = Biases,xshift = .5cm] {$\R^v$};

\draw [->] (input) -- (L1);
\draw [->] (L1) -- (L2);
\draw [-] (L2) -- (dots);
\draw [->] (dots) -- (Lk);
\draw [->] (Lk) -- (Weights);
\draw [->] (Weights) -- (Biases);
\draw [->] (Biases) -- (output);

\draw[rounded corners=5pt,thick]
  (3/4,-1) rectangle ++(8.5,2);
\end{tikzpicture}
\caption{The approximation $\widehat{f}$}
\end{figure}
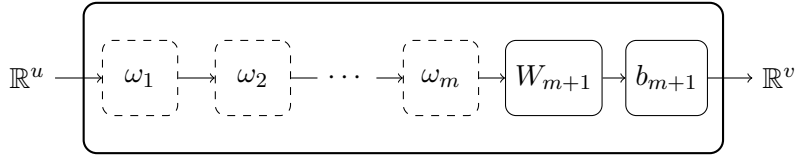
\end{center}

\begin{example}\label{logExample} Let $f(x)=\log(x)$. We will approximate $f$ on the interval $[1,3]$. Let $m=1$ and $\ell_1=2$. We initially set the parameters $p=(\omega_1,b_1,\omega_2,b_2)\in\R^{(1+1)\times 2}\times\R^{(2+1)\times1}$ by \[W_1=[0.75,-0.5]^T\quad b_{1}=[-0.75,1]\quad W_2=[1,1]\quad b_{2}=[-0.5].\] 

The resulting approximation, which we expect to be poor because it knows nothing about the function it is supposed to approximate, is given by the piece-wise linear function (the dotted graph in Figure~\ref{firstApprox.figure}) 
\begin{align}\label{equationForLater}\widehat{f}(x;p)&=[1,1]\rho\left(
\begin{bmatrix}
    0.75 \\
    -0.5 
\end{bmatrix}[x]+\begin{bmatrix}
    -0.75 \\
    1 
\end{bmatrix}\right)+[-0.5]\nonumber\\
&=1\cdot\rho(0.75x-0.75)+1\cdot\rho(-0.5x+1)-0.5\nonumber\\
&=\begin{cases} 
      0.25x-0.25 & 1\leq x\leq 2 \\
       0.75x-1.25& 2< x \leq 3
   \end{cases}\nonumber
\end{align}
\end{example}

\begin{center}
\begin{figure}[h]\label{firstApprox.figure}
\begin{tikzpicture}[scale=.75]
\begin{axis}
    \addplot[domain=1:3] {ln(x)};
    \addplot[domain=1:2,dashed,thick] {(0.25*x)-0.25};
    \addplot[domain=2:3,dashed,thick] {(0.75*x)-1.25};
    
\end{axis}
\end{tikzpicture}
\caption{The function $f$ and the approximation $\widehat{f}$ (dashed)}
\end{figure}
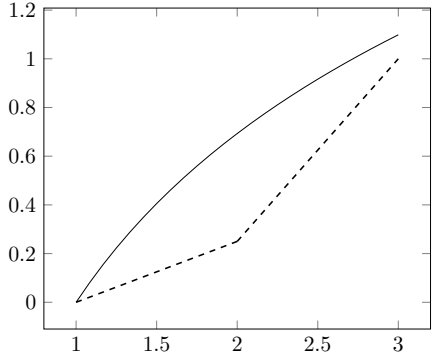
\end{center}

\subsection{Loss functions and gradient descent} 
 We measure the quality of the approximation via a {\bf{loss function}} $L(x;p)$ which we attempt to minimize. By minimizing its value at many ``training" points $x$ distributed throughout the domain, we hope that the value of the approximation $\widehat{f}$ will be close to that of $f$ at points outside of training set, i.e., that the magnitude of the loss function will be small at new points as well.
 
 One example of a loss function is the Euclidean distance 
 \[D(x;p) = \left\lVert f(x) - \widehat{f}(x,p) \right\rVert.\]
 Continuing Example \ref{logExample}, 
 \[D(x;p) = \left\lVert \log(x) - \big((W_2)_{1,1}\rho\big((W_1)_{1,1}(x)+b_{1,1}\big)+(W_2)_{1,2}\rho\big((W_1)_{2,1}(x)+b_{1,2}\big)+b_{2,1}\big) \right\rVert,\] and for our specific parameters $p$,
 \[D(x;p) = \left\lVert \log(x) - \big(1\cdot\rho\big(0.75x-0.75\big)+1\cdot\rho\big(0.75x+1\big)-0.5\big) \right\rVert.\]

Although for fixed parameters $p$, the loss $L(x;p)$ is a function of $x$, the ``learning" step of machine learning happens by interpreting it as a function of $p$, holding $x$ fixed. We can imagine $L$ as a surface above the parameterization space which is fixed by the choice of hyper-parameters and $x$. In order to improve our approximation $\widehat{f}$ at a particular point $x$ in the domain, we modify its parameters in such a way that that the value of the loss function $L$ is reduced, i.e., ``moving downhill" on the surface $L$. 

We compute the gradient $\nabla L$ with respect to the parameters $p$ at the point $(x,p)$ and update the parameters by $p\mapsto p - \epsilon \nabla L$. The value of $\epsilon$ is chosen small enough that $L(x;p - \epsilon \nabla L)<L(x;p)$. When we repeatedly apply this process for points $x$ sampled uniformly at random, this method is called {\bf{stochastic gradient descent}} or SGD. In practice, for reasons of computational efficiency and stability, a batch of points are sampled and the mean of the gradients is used for the update. This is known as {\bf{mini batch}} SGD.

Continuing our example, fix $x=1.5$ and use the chain rule to compute that
\begin{align*}\nabla D(1.5;p)&= \left\langle \frac{\partial D}{\partial\omega_1}\,,\,\frac{\partial D}{\partial b_1}\,,\,\frac{\partial D}{\partial\omega_2}\,,\,\frac{\partial D}{\partial b_2} \right\rangle_{x=1.5}\\
&= \left\langle  -1.5 \,,\, -1.5 \,,\, -1 \,,\, -1 \,,\, -0.375 \,,\, -0.25\,,\, -1 \right\rangle.\end{align*}

Then for $\epsilon=0.02$, the update $p' = p-\epsilon\,\nabla D(1.5;p)$ is given by 
\[\omega_1=[0.78\,,\,-0.47]^T\quad b_1=[-0.73\,,\,1.02]\quad\omega_2=[1.0075\,,\,1.0075]\quad\,b_2=[-0.48].\]
 The resulting updated approximation is \[\widehat{f}(x;p') = \begin{cases} 
      0.312x-0.187 & 1\leq x\leq 2.17 \\
       0.786x-1.215 & 2.17< x \leq 3
   \end{cases}.\]

\begin{center}
\begin{figure}[h]
\begin{tikzpicture}[scale=.75]
\begin{axis}
    \addplot[domain=1:3] {ln(x)};
    \addplot[domain=1:2,dotted,thick] {(0.25*x)-0.25};
    \addplot[domain=2:3,dotted,thick] {(0.75*x)-1.25};
    \addplot[domain=2.17:3,dashed, thick] {.786*x-1.215};
    \addplot[domain=1:2.17,dashed, thick] {0.312*x-.187};
    
\end{axis}
\end{tikzpicture}
\caption{The approximations $\widehat{f}(x;p)$ (dotted) and $\widehat{f}(x;p')$ (dashed)}
\end{figure}
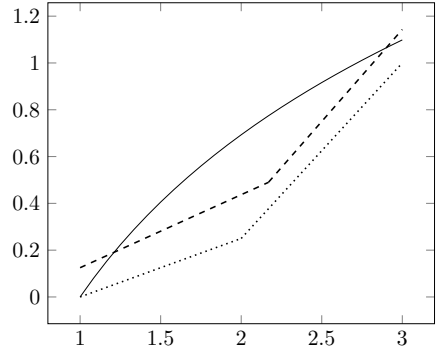
\end{center}

\begin{center}
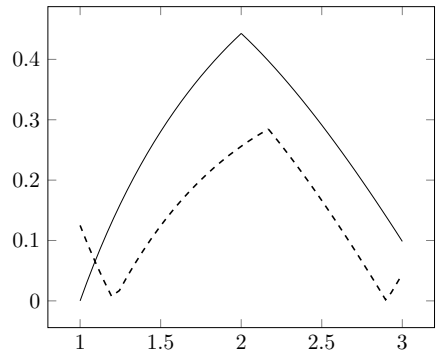
\begin{figure}[h]\label{firstLoss.figure}
\begin{tikzpicture}[scale=.75]
\begin{axis}
    \addplot[domain=1:2] {abs(ln(x)-(0.25*x-0.25))};
    \addplot[domain=2:3] {abs(ln(x)-.75*x+1.25)};
    \addplot[domain=2.17:3,dashed, thick] {abs(ln(x)-.786*x+1.215)};
    \addplot[domain=1:2.17,dashed, thick] {abs(ln(x)-0.312*x+.187)};
    
\end{axis}
\end{tikzpicture}
\caption{The loss function $D$ for $\widehat{f}(x;p)$ and $\widehat{f}(x;p')$ (dashed)}
\end{figure}
\end{center}

\subsection{Training and Validation} In practice, we perform the update step many thousands of times at $x$-values distributed throughout the domain. Often we gather a large collection of pairs $(x,f(x))$ called a {\bf{training set}} to store for later use in the update process, rather than computing the value of $f$ when needed. When generating this collection is costly, as in the case of the function $\IDP$, we use each pair from the collection multiple times over, in some cases as many as 100 times. By analogy with polynomial approximation, where we fit a polynomial to a finite set of points on the graph of a function, one may wonder if, when reusing sampled points in refining our approximation, we are simultaneously losing accuracy at other points in the domain. The short answer is yes. 

This phenomenon of {\bf{overfitting}} is a principal concern in the process of refining our approximation, and there are some standard techniques for mitigating its effect, including:
\begin{itemize}
\item creating two collections of pairs $(x,f(x))$ --- one for training and one for validation. As we train $\widehat{f}$ we simultaneously monitor its accuracy on the {\bf{validation set}}. If the performance on the validation set worsens while improving on the training set, we stop training.
\item introducing a component to the loss function for the magnitudes of the parameters. Experience shows that this method, called {\bf{regularization}}, reduces overfitting to the training data.
\item using the simplest ``structure" possible to achieve the desired performance. Complicated models require more training to achieve their optimal performance, and hence increase the number of times training data is reused. We balance the expressive capability of a complicated approximation with the need to minimize overfitting. 
\end{itemize}

\section{Implementation and Results} \label{ImplementationandResults}
\subsection{Implementation}  Our first goal is an approximation of the function $\HB$ restricted to the vertex sets of $\Delta_{(1,q)}$ simplices of dimension $d=4$ and with $q$-vector bounded by 25. Recall also that, even though the target space of $\HB$ has dimension $(d+1)^{d+1}$, the relevant values are those at indices $\alpha$ whose coordinates sum to more than $d+1$. We restrict to these 2,877 relevant indices. Hence the input to our function is the tuple $(q_1,q_2,q_3,q_4)\in[1,25]^4$ and the output is in $\R^{2,877}$.

There is no general-purpose best design of hyper-parameters that works for every application of a neural network. In fact, it is possible to approximate with arbitrary accuracy any continuous function on a compact subset of $\R^u$ using only one ``hidden layer" ($m=1$.) The general rule is that higher values of $m$ allow smaller values for the $\ell_i$'s while maintaining approximation flexibility. Optimizing hyper-parameters is a process that is outside the scope of this work, so we will simply report that, after experimenting with several values of $m$ and $\ell_i$'s in order to minimize the loss function and computation time, we proceeded using the following choice of hyper-parameters: 
\smallskip

\begin{center}
\begin{tabular}{|c|c|c|c|c|c|}
\hline
\rowcolor{gray!25}$m$&$\ell_1$&$\ell_2$&$\ell_3$&$\ell_4$&$\epsilon$\\\hline
4&100&400&800&3,000&0.001\\\hline

\end{tabular}
\end{center}
\medskip

Tensorflow produces a neural network with the specified dimensions and initializes the weights and biases automatically. In order to implement mini batch SGD, the user must make more decisions than just specifying the hyper-parameters.

\begin{enumerate}
\item Amount of training/validation data:\\
\quad we used Normaliz and a script to compute $\HB$ for 50,000 examples sampled uniformly at random, 10\% of which we reserved for validation.
\item Batch size:\\\quad during training we computed the gradient $\nabla L$ for batches of 10 $q$-vectors at a time and used the mean for the update of the parameters $p$. 
\item Loss function:\\\quad Because the image of $\HB$ is contained by the set $\left\{0,1\right\}^{2,877}$, we may consider the approximation to be the composite $\sigma\widehat{f}$ and use the {\bf{Binary Cross Entropy}} loss function $\BCE$ summed entry-wise over \[\BCE=\left(\HB-1\right)\cdot\log\,\left(1-\sigma\widehat{f}\right)-\HB\cdot\log\,\sigma\widehat{f}.\] When the value of $\HB$ is one, the  value of $\BCE$ is decreased by increasing the value of $\log\sigma\widehat{f}$, i.e., increasing the value of $\widehat{f}$. In this case, minimizing $\BCE$ coincides with minimizing the difference between $\sigma\widehat{f}$ and $\HB$. A similar analysis for the case when $\HB$ equals zero shows that $\BCE$ is a measure of the accuracy of $\sigma\widehat{f}$ as an approximation of $\HB$.

We used a modification of $\BCE$, which we discuss in Section \ref{whyBalance}
\item Training length:\\\quad We performed roughly 100,000 updates in the process of training the approximation.
\end{enumerate}

The result of this training procedure was a piece-wise linear function $f$. It was the well-defined and deterministic\footnote{For purposes of analysis and reproducibility, we initialize the computer's randomness generator so that the stochastic processes are, in fact, deterministic, while still having good randomness properties.} result of the specific choices outlined above. 

An approximation $\widehat{\HB}$ requires a choice of cutoff parameter $\eta$, and $\widehat{\IDP}$ requires the additional choice of a tolerance parameter $\tau$ (introduced in Subsection \ref{whyBalance}). These parameters control the functions $\cutoff$ and $\supp$, respectively. Recall that the resulting approximations are given by 
\[\widehat{\HB}:=\cutoff\;\circ\;\sigma\;\circ\;f\qquad \text{and } \qquad \widehat{\IDP}:=
\supp\;\circ\;\widehat{\HB}
\] We present the results in terms of the values $\eta$ and $\tau$.

\subsection{The approximation $\widehat{\HB}$}\label{whyBalance}
\
While it is tempting to present the accuracy of $\widehat{\HB}$ as the percentage of indices on which it agrees with $\HB$, this is problematic due to the scarcity of non-zero entries in any given image of $\HB$. Consider the $q$-vector $(4,10,14,14)$; there are just 14 non-zero entries among the 2,877 relevant entries in its image under $\HB$. Consequently, an approximation which is uniformly equal to zero would be correct 99.5\% of the time, while knowing essentially nothing about the function it is trying to approximate other than that it is typically equal to zero! We therefore present the accuracy in the form of a {\bf{confusion table}}, which breaks down the indices $\alpha$ along two criteria --- firstly depending on whether   $\HB_\alpha$ is equal to 1 (positive) or 0 (negative), and secondly whether $\widehat{\HB}_\alpha$ is positive or negative. 

\begin{example} Again using the $q$-vector $(4,10,14,14)$, we set $\eta=0.1$ and present the resulting confusion table below: 
\begin{center}
\begin{table}[h]
\begin{tabular}{|c|c|c|}
\hline
	&PREDICTED 0 & PREDICTED 1 \\
	 \hline
  ACTUAL 0&2,808 & 55 \\ 
  \hline
  ACTUAL 1&0 &14\\
   \hline
 \end{tabular}
  \medskip
 \caption{The confusion table for $\widehat{\HB}(4,10,14,14)$ }
 \end{table}
 \end{center}
 \vspace{-24pt}
 
Observe that the sum of the table entries is, in fact, 2,877. We call entries appearing in the upper right cell of the table ``{\bf{false positive}}"  because the approximation incorrectly predicted that a bin contained a Hilbert basis element. Similarly, entries in the bottom left cell are called  ``{\bf{false negative}}". 

We may summarize the table with the pair of ratios \begin{align*}\text{specificity}&=\frac{\text{true negatives}}{\text{true negatives}+\text{false positives}}  \\[1ex]\text{sensitivity}&=\frac{\text{true positives}} {\text{true positives} + \text{false negatives}}.\end{align*} For the present example, they are $98\%$ and $100\%$, respectively. The specificity and sensitivity vary with the cutoff value $\eta$, and are negatively correlated with each other, as demonstrated in Table \ref{varyingAccuracyEta}:
\begin{center}
\begin{table}[h]
\begin{tabular}{|c|c|c|}
	\hline
	$\eta$&specificity& sensitivity\\\hline
	0.1&0.981& 1.00 \\\hline
  0.25&0.986 & 0.857\\\hline
    0.5&0.993 & 0.214\\\hline
 \end{tabular}
 \medskip
 \caption{The effect of varying $\eta$}
 \label{varyingAccuracyEta}
 \end{table}
 \end{center}
\end{example}
\vspace{-24pt}
\subsection{Validation} When we sampled 50,000 examples for training, we reserved 5,000 of them for validation purposes. We now report the performance on this validation set, which we denote $\cS$.
 We aggregate (sum entry-wise) the confusion tables for $\eta=0.1$ in Table \ref{aggregatedTable}.

\begin{center}
\begin{table}[h]
\begin{tabular}{c|c|c|}
	&PREDICTED 0 & PREDICTED 1 \\
	 \hline
  ACTUAL 0&12,726,675 & 1,573,167 \\ 
  \hline
  ACTUAL 1&22,569 & 88,482\\
   \hline
 \end{tabular}
  \medskip
 \caption{An aggregated confusion table for $\cS$ ($\eta=0.1$)}
 \label{aggregatedTable}
 \end{table}
 \end{center}
 
The corresponding aggregated specificity is $89.0\%$, and sensitivity is $79.7\%$. One can account for the difference between specificity and sensitivity by recalling the scarcity of non-zero entries of $\HB$, i.e., the low total number of positives. If we use the loss function 
 \[\BCE=\left(\HB-1\right)\cdot\log\,\left(1-\sigma\widehat{f}\right)-\HB\cdot\log\,\sigma\widehat{f}\] as earlier described for our gradient descent, the resulting approximation will essentially be the constant zero function. In order for the model to learn to identify positives, we must balance the contributions to the loss function associated to positive and negative according to the inverse of their frequency. We accomplish this by introducing a positive term $\beta$ which we call the {\bf{balance}} term:
\[L=\left(\HB-1\right)\cdot\log\,\left(1-\sigma\widehat{f}\right)-{\bf{\beta}}\cdot\HB\cdot\log\,\sigma\widehat{f}.\] The results presented in this section correspond to a $\beta$ value of 10. All other parameters remaining fixed, a higher value, roughly $\beta=75$, is required in order achieve approximately equal sensitivity and specificity. However, it is not necessarily desirable to match the sensitivity and specificity, as we will discuss.

\subsection{The approximation $\widehat{\IDP}$} Under the unrealistic assumption that Hilbert basis elements are distributed roughly uniformly among bins, consider an approximation with a specificity of 99.9\% applied to the $q$-vector of an IDP $\Delta_{(1,q)}$ simplex. Because there are 2,877 bins, the probability that all bins will be correctly identified as  negative (not containing a Hilbert basis element) can be estimated as  $0.999^{2,877}\approx 5.6\%$. Since we expect the incidence of the integer decomposition property to be low, a true positive rate for IDP of $5.6\%$ may result in few or even no examples being correctly predicted as IDP! We have several tools to combat this issue:
\begin{enumerate}
\item manipulating the balance term $\beta$ to produce high specificity (possibly at the expense of sensitivity)
\item manipulating the cutoff value $\eta$ to produce high specificity (again, at the expense of sensitivity)
\item tolerating some number of  positive entries in $\widehat{\HB}$ (under the assumption that many of them are false.)
\end{enumerate}

For this last option we introduce the {\bf{tolerance}} parameter $\tau$, which sets an upper bound on the number of positive entries before the function $\widehat{\IDP}$ returns that an example is IDP negative. In our original description of $\widehat{\IDP}$, $\tau$ was implicitly set to zero.

Table \ref{specificityTable} records the number of true positives over the total number of positives of $\widehat{\IDP}$ when applied to the sample $\cS$ for select values of $\eta$ and $\tau$. 
\begin{center}
\begin{table}[h]
\begin{tabular}{|c|c|c|c|c|}\hline\normalfont
\diaghead(1,-1){\hspace{24pt}}%
{$\tau$}{$\eta$}&
0.5&0.25&0.12&0.05\\    \hline
0& 3/7 (42.9\%)& 3/4 (75.0\%) &3/3 (100.0\%)&3/3 (100.0\%)\\    \hline
10& 21/320 (6.6\%)&  11/38 (29.0\%)&8/21 (38.1\%)&6/12 (50.0\%)\\    \hline
20& 46/1026 (4.5\%)& 21/102 (20.6\%)&11/45 (24.4\%)&8/27 (29.6\%)\\    \hline
30& 65/1770 (3.7\%)& 35/196 (17.9\%)&23/103 (22.3\%)&16/64 (25.0\%)\\    \hline
\end{tabular}
\medskip
\caption{The rate of true positives (specificity) for given values of $\eta$ and $\tau$}
\label{specificityTable}
\end{table}
\end{center}
From Table \ref{specificityTable}, we see that there is not one optimal choice for the values of $\eta$ and $\tau$, since higher specificity is correlated with few examples being found; the goals of specificity and sensitivity are in tension. Figure \ref{tradeoff} shows the (log-scale) relationship between specificity and sensitivity induced by varying these values. When we actually checked for IDP using Normaliz, we found 112 positive examples among 5,000. The analogous ``specificity" is 2.24\%, but the ``sensitivity" is 100\% ---  we plot this point $(2.24,100)$ for reference. 

\begin{center}
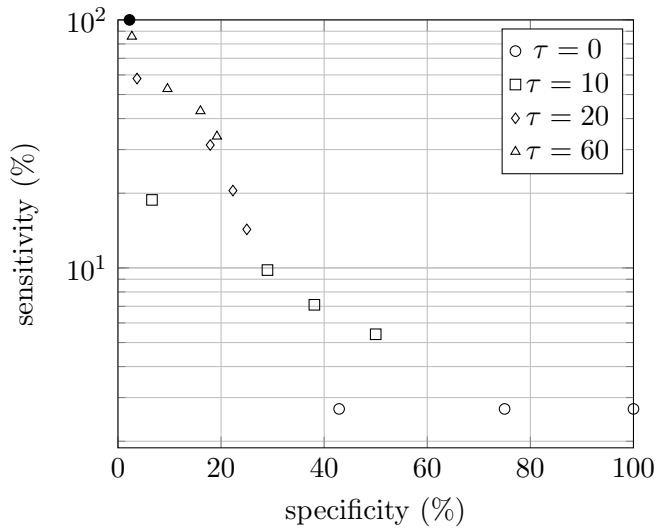
\begin{figure}[h]
\begin{tikzpicture}
\begin{axis}[xmin=0, ymin=0, xmax=100,ymax=100,xlabel={specificity ($\%$)},
ylabel={sensitivity ($\%$)}, ymode= log, grid = both,
scatter/classes={
a={mark=o,draw=black},b={mark=square,draw=black},d={mark=diamond,draw=black},e={mark=triangle,draw=black},f={mark=*,draw=black}}]
\addplot[scatter,only marks,%
    scatter src=explicit symbolic]%
table[meta=label] {
x y label
42.9 2.7 a
75 2.7 a
100 2.7 a
6.6 18.8 b
29 9.8 b
38.1 7.1 b
50 5.4 b
3.7 58 d
17.9 31.3 d
22.3 20.5 d
25 14.3 d
2.7 85.7 e
9.6 52.7 e
16 42.9 e
19.2 33.9 e
2.24 100 f
    };
\addplot +[mark=none,color=black,dashed] coordinates {(2.24,0) (2.24, 100)};
\legend{$\tau=0$,$\tau=10$,$\tau=20$,$\tau=60$}
\end{axis}
\end{tikzpicture}
\caption{The effect of varying $\eta$ for fixed values of $\tau$}
\label{tradeoff}
\end{figure}
\end{center}

 Table~\ref{IDPexamples} lists all 112 $q$-vectors of $\cS$ that correspond to IDP $\Delta_{(1,q)}$ simplices according to Normaliz. Recall that the rate of IDP in $\cS$ is 2.24\%. Table~\ref{predicted00} lists the subset of $\cS$ which are predicted to be IDP when $\eta=0.5$ and $\tau=0$, with the correct positive predictions highlighted. Observe that the incidence of IDP among the {\emph{predicted}} IDP examples is about 43\%, much higher than the rate in the sample at large. 
 
 We highlight the $q$-vectors in Table~\ref{IDPexamples} that correspond to true IDP positive predictions made by setting $\eta=0.1$ and $\tau=65$ (the specificity was 15\% and the sensitivity was 58\%.)

  \begin{center}
 \begin{table}[h]
 \setlength{\tabcolsep}{16pt}
\begin{tabular}{ccccc}
\cellcolor{gray!25}1,1,1,1&\cellcolor{gray!25}1,1,3,9&\cellcolor{gray!25}1,1,21,24&1,2,14,10&1,2,14,10\\
\cellcolor{gray!25}1,3,16,3&\cellcolor{gray!25}1,3,24,1&1,4,2,16&1,4,20,20&\cellcolor{gray!25}1,8,1,1\\
\cellcolor{gray!25}1,10,10,8&1,10,24,24&1,12,4,12&\cellcolor{gray!25}1,15,3,1&\cellcolor{gray!25}1,18,1,6\\
\cellcolor{gray!25}1,21,1,4&\cellcolor{gray!25}1,24,1,9&1,24,14,2&\cellcolor{gray!25}1,24,17,1&\cellcolor{gray!25}1,24,18,1\\
1,24,18,4&\cellcolor{gray!25}1,24,24,20&\cellcolor{gray!25}2,2,2,7&2,3,12,18&2,8,8,4\\
2,10,1,16&2,20,10,5&\cellcolor{gray!25}3,1,1,9&3,6,12,1&3,12,2,24\\
3,14,21,3&3,19,3,1&3,23,15,3&\cellcolor{gray!25}4,1,1,4&4,8,2,16\\
4,20,1,14&4,20,10,20&4,23,4,12&4,24,1,16&6,1,2,12\\
6,2,6,3&6,2,18,9&\cellcolor{gray!25}6,6,6,3&6,14,6,15&6,17,9,18\\
7,3,21,7&\cellcolor{gray!25}7,7,1,7&7,7,16,16&8,1,8,2&8,2,12,24\\
8,16,4,2&\cellcolor{gray!25}9,1,1,9&9,6,18,2&9,9,4,4&9,18,4,4\\
9,18,18,6&9,22,1,11&10,1,5,22&10,5,10,9&10,24,4,1\\
11,22,5,5&\cellcolor{gray!25}12,1,2,6&12,1,24,19&\cellcolor{gray!25}12,2,3,12&12,2,18,3\\
12,3,2,6&12,3,11,6&\cellcolor{gray!25}12,6,1,1&12,6,1,3&\cellcolor{gray!25}12,12,4,12\\
12,16,1,16&12,24,2,24&12,24,6,1&13,2,2,20&14,6,14,7\\
14,7,2,24&14,7,12,1&\cellcolor{gray!25}15,1,13,15&\cellcolor{gray!25}15,15,1,1&16,1,6,6\\
\cellcolor{gray!25}16,4,2,16&16,7,16,16&16,8,4,2&16,16,12,3&16,24,1,22\\
\cellcolor{gray!25}17,1,7,1&17,17,8,4&17,17,17,1&\cellcolor{gray!25}18,1,1,15&18,2,6,6\\
18,2,22,1&18,10,1,15&19,19,1,16&20,2,1,12&20,8,19,8\\
20,14,24,1&\cellcolor{gray!25}20,20,1,20&20,20,4,1&20,20,4,20&20,22,1,22\\
21,21,16,4&\cellcolor{gray!25}22,2,2,22&22,16,4,1&22,16,22,1&22,22,20,1\\
23,2,2,6&23,18,3,24&23,24,24,12&24,2,1,16&24,4,2,4\\
24,24,6,24&24,24,23,12&&&
\end{tabular}
\medskip
\caption{The 112 IDP examples in the sample $\cS$}
\label{IDPexamples}
\end{table}

 \begin{table}[h]
 \setlength{\tabcolsep}{16pt}
\begin{tabular}{ccccc}
\cellcolor{gray!25}1,1,1,1&1,2,10,2&\cellcolor{gray!25}1,3,24,1&\cellcolor{gray!25}2,2,2,7&2,3,4,7\\
4,3,2,5&11,6,9,6&&&
\end{tabular}
\medskip
\caption{Predicted IDP examples ($\eta=0.5$, $\tau=0$)}
\label{predicted00}
\end{table}
\end{center}

\subsection{Discussion} As a demonstration of the utility of the approximation method presented here, we could attempt to advance the previously mentioned work in \cite{2016arXiv160801614B} on $\Delta_{(1,q)}$ simplices by producing a large and diverse collection of IDP examples from which to form conjectures to try to prove. A natural scheme for arriving at such a collection is to first generate a test set, say, all $\Delta_{(1,q)}$ simplices of dimension $d$ with $q$-vector entries bounded by $n$, then verify the integer decomposition property with a program like Normaliz, collecting the positive examples. We could augment this scheme with machine learning by performing an initial sieving step prior to testing with Normaliz. By developing a computationally--cheap approximation to the integer decomposition property, we can reserve the relatively expensive Normaliz computations for those examples that, according to the approximation, are more likely to be IDP. 

In the context of this application, the results outlined above point to a tradeoff between the computational efficiency (controlled by the specificity) and the number of examples that are ultimately produced (controlled by the sensitivity). It also seems that the approximation $\widehat{\IDP}$ is biased in favor of repeated entries (see the highlighted examples in Table \ref{IDPexamples},) which brings into question how diverse a set of examples it is capable of producing.

We computed the value of $\widehat{\IDP}$ for all 390,625 $\Delta_{(1,q)}$ simplices with $q$-vector in $[1,25]^4$ using $\eta=0.1$ and $\tau=65$. The computation took 3,3562 seconds and produced 2,520 predicted positives. We then computed $\IDP$ for these examples and found that 521 were IDP. This corresponds to a specificity of 20.7\%. It is impractical to compute $\IDP$ over the entire collection of 390,625 examples in order to compute the sensitivity, so it is not known. 


\section{Concluding Remarks} It is very likely that other choices of hyper-parameters, or even entirely different machine learning techniques, will yield improved performance. However, the results, such as they are, do indicate that functions like $\IDP$ have the potential to be modeled by machine learning techniques. The following remarks point out directions in which this investigation might be continued.

\begin{remark}Figure \ref{tradeoff} shows the tradeoff between specificity and sensitivity for an approximation $\widehat{\IDP}$ that is a product of a choice of hyper-parameters, balance $\beta$, and training size. It would be useful to see the effect of different values of $\beta$ in the plot. Does there exist a choice which achieves sensitivity and specificity of 50\%?
\end{remark}

\begin{remark} The intermediate step of computing an approximation of $\HB$ has several potential applications which are not explicitly discussed in this paper. In particular we note that by computing the set of lattice points in each predicted--positive bin, we have an approximation of the Hilbert basis itself. 

If the sensitivity of $\widehat{\HB}$ is high then it is very likely that the Hilbert basis is {\emph{contained}} by the approximated Hilbert basis, and may be recovered by the reduction algorithm used by Normaliz (implemented by a python script, for example.) This could potentially be more efficient than Normaliz, which reduces the entire fundamental parallelepiped, if the specificity is high.
\end{remark}

\begin{remark}The Ehrhart $h^*$-vector records the number of lattice points at each height in $\Pi_\Delta$. In the case that the $h^*$-vector is the concatenation of two vectors --- the first increasing and the second decreasing --- we call it {\bf{unimodal}}. Unimodality is another interesting property to investigate for lattice simplices, see, e.g., \cite{unimodality}. If, rather than recording the presence of a Hilbert basis element, we were to record the number of fundamental parallelepiped points in each bin, we could approximate the $h^*$-vector using Proposition \ref{heightProp}. Thus we have a framework for predicting both IDP {\emph{and}} unimodality.
\end{remark}

\section*{Acknowledgement}The author thanks Devin Willmott and Kyle Helfrich for many helpful conversations.
\providecommand{\bysame}{\leavevmode\hbox to3em{\hrulefill}\thinspace}
\providecommand{\MR}{\relax\ifhmode\unskip\space\fi MR }
\providecommand{\MRhref}[2]{%
  \href{http://www.ams.org/mathscinet-getitem?mr=#1}{#2}
}
\providecommand{\href}[2]{#2}

\end{document}